\newtheorem{claim}{Claim}
\newtheorem{theorem}{Theorem}
\newtheorem{lemma}{Lemma}
\newtheorem{conjecture}{Conjecture}
\newtheorem{problem}{Problem}
\newcommand*\patchAmsMathEnvironmentForLineno[1]{%
  \expandafter\let\csname old#1\expandafter\endcsname\csname #1\endcsname
  \expandafter\let\csname oldend#1\expandafter\endcsname\csname end#1\endcsname
  \renewenvironment{#1}%
     {\linenomath\csname old#1\endcsname}%
     {\csname oldend#1\endcsname\endlinenomath}}%
\newcommand*\patchBothAmsMathEnvironmentsForLineno[1]{%
  \patchAmsMathEnvironmentForLineno{#1}%
  \patchAmsMathEnvironmentForLineno{#1*}}%
\newcommand{\chiC}{\chi_{\mathrm{fum}}}
\definecolor{blue}{rgb}{0,0,1}
\title{Facial unique-maximum colorings of plane graphs with restriction on big vertices}
\author
{
	Bernard Lidick\'y\thanks{Department of Mathematics, Iowa State University, USA.
		E-Mail: \texttt{lidicky@iastate.edu}  } \and		
	Kacy Messerschmidt\thanks{Department of Mathematics, Iowa State University, USA.
		E-Mail: \texttt{kacymess@iastate.edu}} \and	
  	Riste \v{S}krekovski\thanks{Faculty of Information Studies, Novo mesto
  		\& University of Ljubljana, Faculty of Mathematics and Physics
  		\& University of Primorska, FAMNIT, Koper, Slovenia.
   		E-Mail: \texttt{skrekovski@gmail.com}}		
}
\begin{document}
\maketitle

{
	\abstract
	{
		A facial unique-maximum coloring of a plane graph is a proper coloring of the vertices using positive integers 
		such that each face has a unique vertex that receives the maximum color in that face.
		Fabrici and G\"{o}ring (2016) proposed a strengthening of the Four Color Theorem conjecturing that
		all plane graphs have a facial unique-maximum coloring using four colors. This conjecture has been disproven for general
		plane graphs and it was shown that five colors suffice.
		In this paper we show that plane graphs, where vertices of degree at least four induce a star forest, 
		are facially unique-maximum 4-colorable. 
		This improves a previous result for subcubic plane graphs by Andova, Lidick\'y, Lu\v{z}ar, and \v{S}krekovski (2018).
		We conclude the paper by proposing some problems.
	}

	\bigskip
	{\noindent\small \textbf{Keywords:} facial unique-maximum coloring, plane graph.}
}

\section{Introduction}

In this paper, we consider simple graphs only.
A graph is \emph{planar} if it can be drawn in the plane so that no edges cross
and a graph is called \emph{plane} if it is drawn in such a way.
A \emph{coloring} is an assignment of colors to the vertices of a graph.
For the purposes of this paper, we will use integers to represent colors.
A coloring is \emph{proper} if no two adjacent vertices are assigned the same color.
In~\cite{FabGor16}, Fabrici and G\"{o}ring proposed a new type of coloring.
A \emph{facial unique maximum coloring} (or \emph{FUM-coloring}) is a proper coloring using positive integers such that
each face $f$ has only one incident vertex that receives the maximum color on $f$.
For a graph $G$, the minimum number of colors required for a FUM-coloring of $G$
is called the \emph{FUM-chromatic number} of $G$, and is denoted $\chiC(G)$.

One of the most important theorems in graph theory is the Four Color Theorem,
which states that any planar graph has a proper coloring using at most four colors.
Fabrici and G\"{o}ring~\cite{FabGor16} proposed the following strengthening of the Four Color Theorem.

\begin{conjecture}[Fabrici and G\"{o}ring]
	\label{conj:plane4}
	Given any plane graph $G$, $\chiC(G) \leq 4$.
\end{conjecture}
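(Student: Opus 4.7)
The plan is to follow a minimum-counterexample plus discharging approach analogous to arguments for the Four Color Theorem, carefully tracking the facial unique-maximum constraint throughout. I would take $G$ to be a plane graph that minimizes $|V(G)|+|E(G)|$ subject to $\chiC(G) \geq 5$. Standard preprocessing reduces to the case that $G$ is $2$-connected with every face bounded by a cycle, and with no separating triangle or separating $4$-cycle: in each such case one would FUM-color the two sides obtained by splitting along the separating cycle, then paste them together after a color permutation chosen so that the separating cycle still carries a unique maximum on both of its incident faces.

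Next I would build a catalogue of reducible configurations. Natural candidates are vertices of degree at most $2$, two adjacent $3$-vertices sharing a $3$-face, and $3$-vertices incident to short faces whose remaining vertices have small degree. For each configuration $H$, I would delete or contract a carefully chosen subgraph to obtain $G'$, apply minimality to get a FUM-coloring $c'$ of $G'$ using at most $4$ colors, and then extend $c'$ across $H$. The extension demands two checks: properness, and that for every face $f$ incident to a reinserted vertex colored $4$, the coloring $c'$ did not already assign color $4$ to another vertex of $f$. The second check is what forces the reducibility list here to be substantially richer than for ordinary proper $4$-coloring, and its verification is the most labor-intensive step.

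The discharging phase starts from Euler's formula written as $\sum_{v \in V(G)}(d(v)-4) + \sum_{f \in F(G)}(\ell(f)-4) = -8$, giving each vertex $v$ initial charge $d(v)-4$ and each face $f$ initial charge $\ell(f)-4$. I would design rules that move charge from large faces and high-degree vertices to the deficits at $3$-vertices and $3$-faces, and then argue that the absence of the reducible configurations established above forces every vertex and every face to end with nonnegative charge, contradicting the total of $-8$ and eliminating $G$.

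The hard part, and the step where I expect the entire scheme to stand or fall, is the interaction of the FUM condition with faces that carry several high-degree vertices on their boundary. When such a face $f$ must receive color $4$ at exactly one vertex, any local recoloring forced by a reducibility argument can inadvertently create a second vertex of $f$ with color $4$, and repairing this can propagate around $f$ and into neighboring faces. Controlling this propagation means the discharging rules cannot simply be imported from the Four Color Theorem; they must be engineered so that each configuration declared reducible genuinely survives the FUM extension. Concretely, I would first try to prove that any $3$-face or $4$-face whose boundary meets at most one vertex of degree $\geq 5$ is reducible, and then look for discharging rules that force such a face to exist — the success or failure of that attempt should be a decisive indicator of whether this route can settle the conjecture at all.
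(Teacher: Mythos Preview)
Your proposal attempts to prove a statement that is false. Conjecture~\ref{conj:plane4} was disproven: the paper itself records, immediately after stating it, that the conjecture ``was disproven in the general case by the authors~\cite{LidMesSkr17},'' and Wendland's upper bound of $5$ is currently the best known. Consequently no discharging scheme, however elaborate, can succeed; a minimum counterexample genuinely exists, and your argument must break down somewhere in the reducibility catalogue or in the discharging phase.

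You in fact identify the breaking point yourself. The ``hard part'' you flag---faces bounded by several high-degree vertices, where placing color $4$ cannot be localized without propagating conflicts---is exactly where the counterexample of~\cite{LidMesSkr17} lives. That graph has minimum degree $4$ (with two vertices of degree $5$), so none of your proposed reducible configurations (vertices of degree at most $2$, adjacent $3$-vertices on a $3$-face, $3$-vertices on short faces) occur in it. Your discharging would therefore be forced to declare reducible some configuration that is \emph{not} reducible under the FUM constraint, and the extension step would fail.

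The paper makes no attempt to prove Conjecture~\ref{conj:plane4}. Its actual result is the restricted Theorem~\ref{thm:starforest}: if the vertices of degree at least $4$ induce only a star forest, then $\chiC(G)\le 4$. The method there is not discharging but a Thomassen-style precoloring-extension induction (Lemma~\ref{lem:precolor}), carrying a short precolored path on the outer face through a sequence of structural reductions on the outer cycle.
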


This conjecture was disproven in the general case by the authors~\cite{LidMesSkr17}.
Fabrici and G\"{o}ring \cite{FabGor16} proved that for any plane graph $G$, $\chiC(G) \leq 6$,
while Wendland~\cite{Wen16} improved the upper bound to $5$.
Andova, Lidick\'y, Lu\v{z}ar, and \v{S}krekovski~\cite{AndLidLuzSkr17} proved that if $G$ is a subcubic or outerplane graph, $\chiC(G) \leq 4$.

Recall that a \emph{star} is a connected graph with at most one vertex with degree greater than 1
and a \emph{star forest} is graph consisting of disjoint stars. %Unnecessary
The main result of this paper is the following strengthening of the subcubic result.

\begin{theorem}
	\label{thm:starforest}
	Let $G$ be a plane graph and $X = \{ v \in V(G) : d(v) \geq 4 \}$.
	If $G[X]$ is a star forest, then $\chiC (G) \leq 4$.
\end{theorem}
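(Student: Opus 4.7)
The plan is to argue by contradiction via the discharging method, following the broad outline that worked for subcubic graphs in \cite{AndLidLuzSkr17}. Assume $G$ is a counterexample minimizing $|V(G)|+|E(G)|$: so $G$ is a plane graph with $G[X]$ a star forest, yet $\chiC(G)\ge 5$. I would first derive basic consequences of minimality: that $G$ is $2$-connected (so every face is bounded by a cycle), has no vertex of degree $1$, and more generally has no ``obvious'' reducible configurations. A recurring theme will be that any proper subgraph or minor of $G$ obtained by local surgery still satisfies the star-forest hypothesis (since deleting or contracting cannot create new big vertices out of nothing), so by minimality it admits a FUM-$4$-coloring.

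Next, I would establish a catalog of reducible configurations. The template is: pick a small substructure $H$ in $G$, replace it with something smaller to obtain $G'$, verify that $G'[X']$ is still a star forest, take a FUM-$4$-coloring $\phi$ of $G'$, and extend $\phi$ back to $G$ (possibly after permuting colors locally). Natural candidates include vertices of degree $2$, vertices of degree $3$ with restricted neighborhoods, $3$-faces whose vertices have low total degree, and leaves of the star forest $G[X]$ whose attachment to the rest of $G$ is constrained. Extension is delicate because a face of $G'$ may already have its color $4$ appearing twice, and we need the reintroduced vertex to supply a new unique maximum on each newly created face while not spoiling the maxima on modified existing faces; the reductions must be chosen so that these conditions can be met after a color swap.

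Once enough reducible configurations are ruled out, I would apply discharging. Assign initial charge $d(v)-4$ to each vertex and $d(f)-4$ to each face; Euler's formula gives total charge $-8$. Negative charge is concentrated at vertices of degree $2$ and $3$ and at $3$-faces; positive charge sits on big vertices and on faces of size at least $5$. I would design rules tailored to the star-forest structure, with the amount a big vertex sends to an incident $3$-face or a neighboring small vertex depending on whether the big vertex is isolated in $G[X]$, a leaf, or a center of its star. The goal is to show every vertex and face ends up with non-negative charge, contradicting the total $-8$.

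The main obstacle will be matching the reducible-configuration list with the discharging rules under the star-forest constraint. In particular, every $3$-face must either be reducible or collect enough charge from its boundary; the star-forest hypothesis is the key leverage, since it forces that a $3$-face meeting two big vertices has those big vertices in a ``center--leaf'' relation, pinning down the local structure. Calibrating how much charge each big vertex can afford to distribute (centers are adjacent to several big leaves and so must budget carefully) while ensuring triangles and small vertices are fully covered is where I expect most of the technical work to lie.
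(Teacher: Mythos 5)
Your proposal is an outline of a strategy, not a proof: the catalog of reducible configurations, the discharging rules, and the verification of non-negative final charges are all left unspecified, and these are exactly where the content of such an argument lives. More importantly, there is a concrete obstacle to the reducibility step as you describe it that your plan does not address. For FUM-colorings, deleting even a degree-$2$ vertex $v$ merges its two incident faces into one face of $G'$; a FUM-$4$-coloring of $G'$ guarantees a unique maximum on the merged face, but after reinserting $v$ the two restored faces need not each have a unique maximum, and coloring $v$ with $4$ only works if $4$ does not already appear on those faces. No local color swap fixes this in general, because the offending occurrences of $4$ may be far from $v$. The paper resolves precisely this difficulty by proving a stronger inductive statement (its Lemma~\ref{lem:precolor}, in the style of Thomassen's precoloring extension): a path of at most two precolored vertices on the outer face, colors $\{1,2,3\}$ only on the outer face, and unique maxima required only on internal faces. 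The invariant that $4$ never appears on the outer face is what makes reinsertion of a vertex colored $4$ safe, and your proposal contains no substitute for it.

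Two further specific problems. First, your blanket claim that ``deleting or contracting cannot create new big vertices'' is false for contraction: contracting an edge typically raises the degree of the merged vertex, so $G'[X']$ need not remain a star forest; any reduction using contraction would have to be checked individually. Second, minimality does not straightforwardly yield $2$-connectivity for FUM-colorings, since gluing FUM-colorings of blocks at a cut vertex requires the faces shared between blocks to retain unique maxima; the paper handles cut vertices (Claim~\ref{claim:Fcycle}) only with the help of the precoloring machinery. For comparison, the paper's actual proof is a short Thomassen-style induction on the outer cycle $C$: it shows $C$ is an induced cycle, eliminates non-precolored vertices of degree at most $3$ on $C$, and then uses the star-forest hypothesis to bound $|C|\le 4$ and finish by hand --- no discharging and no global counting. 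Your route is not necessarily doomed, but as written it omits the one idea (a strengthened inductive statement controlling where color $4$ may appear) that makes the extension steps possible.
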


We present the proof of Theorem~\ref{thm:starforest} in the next section. 
We use the precoloring extension method that was introduced by Thomassen~\cite{Tho94} to show planar graphs are 5-choosable. 
This approach could give even a stronger result with distant crossings~\cite{bib-DLM}.
It could be also used for the list-coloring version~\cite{bib-DLS,bib-thomass3} of Gr\"otzsch theorem, for example.
The approach works by a clever induction. The proof is actually a stronger statement, which allows the induction to work. We state the stronger version of Theorem~\ref{thm:starforest} in the next section.

\section{Proof of the main result}

We prove Theorem~\ref{thm:starforest} as a corollary of a slightly stronger result stated in Lemma~\ref{lem:precolor}.
	
\begin{lemma}
	\label{lem:precolor}
	Let $G$ be a plane graph with a path $P$ on the outer face with at most two vertices.
	Let $X = \{ v \in V(G) : d (v) \geq 4 \} \cup \{ v \in V(P) : d (v) = 3 \}$ such that $G[X]$ is a star forest.
	Given any proper coloring $c$ of the vertices in $P$ using colors $\{ 1, 2, 3 \}$,
	there exists an extension of $c$ to $G$ using colors $\{ 1, 2, 3, 4 \}$ such that color $4$
	does not appear on the outer face and all internal faces have a unique maximum color.
\end{lemma}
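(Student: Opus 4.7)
The plan is to prove Lemma~\ref{lem:precolor} by induction on $|V(G)|$, following the Thomassen-style precoloring-extension paradigm. The base cases, with $|V(G)|\leq 3$, can be disposed of by direct inspection, since only finitely many configurations arise up to symmetry: each internal face (if any) has so few vertices that either color $3$ already serves as its unique maximum, or a single vertex can be promoted to color $4$.

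For the inductive step, I take a hypothetical minimum counterexample $G$ together with $P$ and $c$, and apply a sequence of reductions. First, I would reduce to the $2$-connected case: if $G$ is disconnected or contains a cut vertex, each block (with the cut vertex possibly playing the role of a one-vertex $P$) is smaller, so induction applies and the colorings glue along the cut vertex. Next, assuming the outer face is a cycle $C$, if $C$ has a chord $uv$, I would split $G$ along $uv$ into plane graphs $G_1$ and $G_2$, apply induction first to the piece containing $P$, and then to the other piece using the (now colored) edge $uv$ as its new precolored path. In the remaining case $C$ is chordless, and I would target a vertex $v\in V(C)\setminus V(P)$ adjacent on $C$ to an endpoint of $P$. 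Here the star-forest condition is used crucially to limit how many big neighbors $v$ can have, which in turn restricts the local structure enough to either delete $v$ or contract a short detour through $v$ and invoke induction on the resulting smaller graph. The extension is then completed by coloring $v$ with a color from $\{1,2,3\}$ that avoids its already-colored neighbors, while at most one vertex per internal face receives color $4$ to guarantee the FUM condition.

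The main obstacle, I expect, will be preserving the star-forest hypothesis across the reductions. When we split along a chord $uv$, the endpoints $u$ and $v$ may end up as degree-$3$ vertices on the new precolored path in one of the subgraphs and thereby join the new version of $X$; a big neighbor of $u$ that was a leaf of a star in the original $G[X]$ could then become adjacent in the new induced subgraph to both its old star center and to $v$, producing a path of length $2$ and violating the star-forest structure. A similar issue arises when we delete a low-degree boundary vertex, since the degrees (and therefore the membership in $X$) of its neighbors may change. Much of the work therefore lies in choosing the chord or the deleted vertex carefully, and in exploiting the fact that in a star forest each big vertex is adjacent to at most one other big vertex, so that the reduced instance still satisfies the hypothesis of the lemma. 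The remaining bookkeeping ensures that giving $v$ a color from $\{1,2,3\}$ and reserving color $4$ for a single vertex of each internal face yields the required FUM extension.
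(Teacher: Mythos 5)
Your skeleton (minimal counterexample, then eliminating cut vertices, chords, and finally a boundary vertex) matches the paper's, but the proposal has genuine gaps at exactly the points where the real work happens. The most serious one is the mechanism for restoring the unique-maximum property on the internal face that gets ``opened up.'' If you delete a low-degree vertex $v\in V(C)\setminus V(P)$ and color $G-v$ by induction, the internal face $f$ of $G$ incident with $v$ merges into the outer face of $G-v$, so the inductive hypothesis gives you only colors from $\{1,2,3\}$ on $f$ with no uniqueness guarantee; re-inserting $v$ with a color from $\{1,2,3\}$ then leaves $f$ with a possibly repeated maximum. Your phrase ``at most one vertex per internal face receives color $4$'' states the desired conclusion, not how to achieve it. The paper's key move is to delete a \emph{second} vertex $u$ lying on $f$ but not on $C$ (such a $u$ exists precisely because $C$ is chordless and $G$ is not a cycle), color $G-\{u,v\}$ by minimality, and only then assign $u$ the color $4$: that single $4$ is automatically the unique maximum of every face incident with $u$, including $f$. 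This two-vertex deletion is the engine of every remaining case and is absent from your plan.

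Two further gaps. First, you correctly identify preservation of the star-forest hypothesis as the main obstacle but do not resolve it, and ``choosing the chord carefully'' is not an option since you must dispose of an arbitrary chord. The clean resolution (the paper's Claim~\ref{claim:subgraph}) is that whenever each newly precolored vertex either has degree at least $4$ in the subgraph $H$ or has strictly smaller degree in $H$ than in $G$, the new set $X'$ is contained in the old $X$, so $H[X']$ is a subgraph of $G[X]$ and automatically a star forest; no new adjacencies can appear. Second, your boundary reduction targets a low-degree vertex of $C-P$ adjacent to $P$, but in the hardest case no such vertex exists: after the easy reductions every vertex of $C-P$ has degree at least $4$. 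The star-forest hypothesis is then used not to bound the number of big neighbors of one vertex, as you suggest, but to bound $C$ itself: the vertices of $C-P$ all lie in $X$ and induce a path there, and a star forest contains no path on four vertices, so $|V(C)\setminus V(P)|\le 3$. This collapses the problem to a handful of triangle and $4$-cycle configurations, each killed by deleting a $2$-vertex of $P$ together with the hidden vertex $u$ and pre-coloring the neighbors so the colors match across the re-insertion; the final contradiction is that otherwise all of $C$ lies in $X$, contradicting acyclicity. Without this endgame your induction does not close.
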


\begin{proof}
	Suppose for contradiction that $G$ is a counterexample of minimum order.
	Let $C$ be the boundary walk of the outer face of $G$. 
	
	First we introduce a claim, which we refer to repeatedly when using the minimality of $G$. 
	When we use the minimality of $G$, we color some subgraph $H$ or $G$.
	However, precoloring a vertex of degree 3 in $H$ may ruin the property that $X$ in $H$ is a star forest. The purpose of the following claim is that a vertex can be a newly precolored vertex in $H$ as long as it has degree at least 4 in $H$ or its degree in $G$ is strictly bigger than in $H$.

	\begin{claim}
		\label{claim:subgraph}
		Let $H$ be a proper subgraph of $G$ and $P'$ be a path on the outer face of $H$ with at most two vertices such that
		$d_H (v) < d_G (v)$ for all $v \in V(P') \setminus V(P)$  if $d_H(v) = 3$.
		Then any proper coloring $c'$ of the vertices in $P'$ using colors $\{ 1, 2, 3 \}$ can be extended to a coloring of $H$
		using colors $\{ 1, 2, 3, 4 \}$ such that color $4$ does not appear on the outer face and all internal faces have
		a unique maximum color.
	\end{claim}
	
	\begin{proof}
		Let $X' = \{ v \in V(H) : d_H (v) \geq 4 \} \cup \{ v \in V(P') : d_H (v) = 3 \}$.
		If $X'$ is a star forest, then $H$ can be colored by the minimality of $G$.
		Suppose for contradiction that $X'$ is not a star forest.
		Then there must be a vertex $v$ that is in $X'$ but not in $X$.
		Since $d_H (v) \leq d_G (v)$, we cannot have $d_H (v) \geq 4$,
		otherwise $d_G (v) \geq 4$ and thus $v$ would be in $X$.
		So $v$ must be in the set $\{ v \in V(P') : d_H (v) = 3 \}$.
		This implies that either $d_G (v) \geq 4$ or $v \in P$.
		In either case $v \in X$, which is a contradiction.
		
	\end{proof}
	
	\begin{claim}
		\label{claim:Fcycle}
		$C$ is a cycle.
	\end{claim}
		
	\begin{proof}
	        First note that $G$ has only one connected component incident to the outer face, otherwise Claim~\ref{claim:subgraph} could be used on
	        each such component of $G$ separately.
	        Note that if $G$ has no internal faces, then it is a tree and any proper coloring using $\{1,2,3\}$ works.
	        
	        If every vertex in the outer face is incident to exactly two edges of the outer face, $C$ is a cycle.
		So suppose for contradiction that $G$ has a vertex $v$ incident with at least three edges in the outer face.
		Notice that $v$ is a cut vertex. 
		Let $Y$ be the set of vertices consisting of $v$ and the vertices of the connected component of $G - v$ that intersects $P$,
		if such a component exists. 
		If no component of $G-v$ intersects $P$, pick an arbitrary one. 
		Let $Y'$ be the union of $Y$ and the set of all vertices that are drawn in the interior faces of $G[Y]$ in $G$.
		By the minimality of $G$, there exists a coloring $c_{Y'}$ of $G[Y']$.
		Let $Z = (V(G) \setminus Y') \cup \{ v \}$.
		Since $d_{G[Z]} (v) < d_G (v)$,
		by Claim~\ref{claim:subgraph} a precoloring of $v$ with $c_{Y'} (v)$ can be extended to a coloring $c_Z$ of $G[Z]$.
		Since $v$ has the same color in $c_{Y'}$ and $c_Z$, we can combine these two colorings into a coloring $c$ of $G$, a contradiction.
	\end{proof}

	\begin{claim}
		\label{claim:chord}
		$C$ does not have any chords.
	\end{claim}
	
	\begin{proof}
		Suppose for contradiction that $C$ has a chord $uv$.
		Let $Y$ be the set of vertices consisting of $u$ and $v$ and the vertices of the connected component of $G - \{ u, v \}$ that intersects $P$,
		if such a component exists. 
		If no component of $G-\{u,v\}$ intersects $P$, pick an arbitrary one. 
		Let $Y'$ be the union of $Y$ and the set of all vertices that are drawn in the interior faces of $G[Y]$ in $G$.
		By the minimality of $G$, there exists a coloring $c_{Y'}$ of $G[Y']$.
		Let $Z = (V(G) \setminus Y') \cup \{ u, v \}$.
		Since $d_{G[Z]} (v) < d_G (v)$ and $d_{G[Z]} (u) < d_G (u)$,
		by Claim~\ref{claim:subgraph} a precoloring of $u$ with $c_{Y'} (u)$ and $v$ with $c_{Y'} (v)$
		can be extended to a coloring $c_Z$ of $G[Z]$.
		Since each $u$ and $v$ have the same color in $c_{Y'}$ and $c_Z$,
		we can combine these two colorings into a coloring $c$ of $G$, a contradiction.
		Therefore, $C$ does not have any chords.
	\end{proof}
	
	\begin{claim}
		\label{claim:cycle}
		$G$ is not a cycle.
	\end{claim}
	
	\begin{proof}
		Suppose for contradiction that $G$ is a cycle.
		If a vertex in $P$ receives color $3$, then color the rest of $G$ alternately with colors $1$ and $2$.
		Otherwise, pick a vertex not in $P$ to receive color $3$,
		then color the rest of the vertices alternately with $1$ and $2$.
		The interior face has only one vertex that receives color $3$, hence all interior faces have a unique maximum color, which is a contradiction.
	\end{proof}
	
	\begin{claim}
		\label{claim:23-vertex}
		$C - P$ does not contain a $2$- or $3$-vertex.
	\end{claim}
	
	\begin{figure}[htp!]
		\begin{center}
			\includegraphics[scale=1]{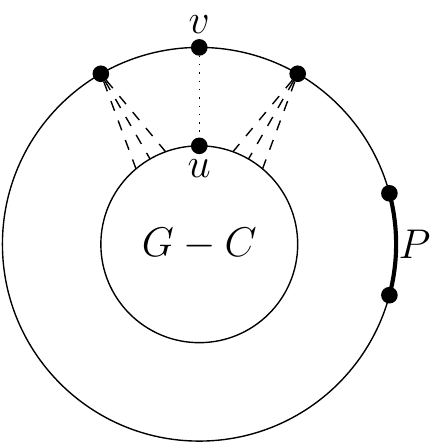}
		\end{center}
		\caption{A non-precolored $2$- or $3$-vertex $v$ on the outer face of $G$}
		\label{fig:23-vertex}
	\end{figure}
	
	\begin{proof}
		Suppose for contradiction that $C - P$ contains a $2$- or $3$-vertex $v$.
		If $v$ is a $2$-vertex, by Claims~\ref{claim:chord} and~\ref{claim:cycle}
		there must be a vertex $u$ that is on the outer face of $G - v$ but not in $C$, as shown in Figure~\ref{fig:23-vertex}.
		If $v$ is a $3$-vertex, let $u$ be the neighbor of $v$ outside of $C$.
		Let $Y = V(G) \setminus \{ u, v \}$.
		By the minimality of $G$, we can color $Y$ such that every internal face of $G[Y]$ has a unique maximum color
		and the vertices in the outer face receive colors from $\{1,2,3\}$.
		Coloring $u$ with $4$ gives a unique maximum color to every face incident with $u$.
		We now color $v$ with the color from $\{ 1, 2, 3 \}$ not used on either of its two neighbors in $C$
		to complete the coloring and arrive at a contradiction.
	\end{proof}
	
	By Claim~\ref{claim:Fcycle}, $C$ cannot have any $1$-vertices.
	By Claim~\ref{claim:23-vertex}, $C - P$ cannot have $2$- or $3$-vertices either.
	Hence each vertex in $C - P$ must have degree at least $4$.
	Since $V(C) \backslash V(P) \subseteq X$, $C - P$ can contain at most three vertices.
	Moreover, since $X$ is acyclic, $P$ is nonempty and contains at least one vertex of degree two.
	
	\begin{claim}
		\label{claim:22444}
		$C - P$ does not contain three vertices.
	\end{claim}
	
	\begin{figure}[htp!]
		\begin{center}
			\includegraphics{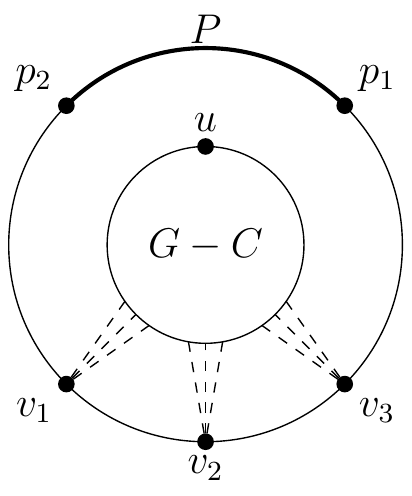}
		\end{center}
		\caption{An outer face consisting of two precolored vertices and three non-precolored vertices of degree at least $4$}
		\label{fig:22444}
	\end{figure}
	
	\begin{proof}
		Suppose for contradiction that $C - P$ contains three vertices.
		Since neither of the vertices in $P$ can belong to $X$, $P$ must consist of one or two $2$-vertices, as shown in Figure~\ref{fig:22444}.
		By Claims~\ref{claim:chord} and~\ref{claim:cycle},
		there must exist a vertex $u$ that is on the outer face of $G - P$ but not in $C$.
		Let $Y = V(G) \setminus \{ p_1, p_2, u \}$.
		Color $v_2$ with the color used on $p_2$ and color $v_3$ with the color from $\{ 1, 2, 3 \}$ not used on $p_1$ or $v_2$.
		By Claim~\ref{claim:subgraph}, we can extend the precoloring on $\{ v_2, v_3 \}$ to all of $G[Y]$
		such that every internal face of $G[Y]$ has a unique maximum color.
		Since $v_2$ received the same color as $p_2$, the color that $v_1$ receives will not conflict with $p_2$ in $G$.
		Coloring $u$ with $4$ gives a unique maximum color to every face incident with $u$,
		thus completing the coloring of $G$ and producing a contradiction.
	\end{proof}
	
	Notice that Claim~\ref{claim:22444} implies that $C$ is either a 4-cycle or a triangle.
	
	\begin{claim}
		\label{claim:2-verticesP}
		$P$ does not consist of two $2$-vertices.
	\end{claim}
	
	\begin{figure}[htp!]
		\begin{center}
			\includegraphics{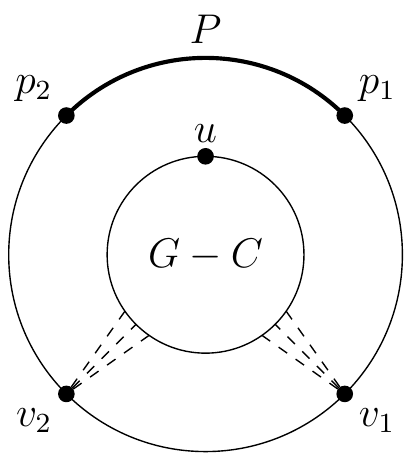}
		\end{center}
		\caption{A precolored path $P$ consisting of two $2$-vertices}
		\label{fig:2-verticesP}
	\end{figure}
	
	\begin{proof}
		Suppose for contradiction that $P$ consists of two $2$-vertices $p_1$ and $p_2$.
		Let $v_1$ and $v_2$ be the neighbors of $p_1$ and $p_2$, respectively, in $C - P$.
		Note that it is possible that $|V(C)| = 3$, in which case $v_1 = v_2$.
		Since $C - P$ has fewer than three vertices, either $v_1 = v_2$ or
		$v_1$ and $v_2$ are the only vertices in $C - P$ and are therefore adjacent along $C$.
		By Claims~\ref{claim:chord} and~\ref{claim:cycle},
		there must exist a vertex $u$ that is on the outer face of $G - P$ but not in $C$, as shown in Figure~\ref{fig:2-verticesP}.
		Let $Y = V(G) \setminus \{ p_1, p_2, u \}$.
		Color $v_1$ with a color from $\{ 1, 2, 3 \}$ not used on $p_1$
		and color $v_2$ with a color from $\{ 1, 2, 3 \}$ not used on either $p_2$ or $v_1$.
		By Claim~\ref{claim:subgraph}, we can extend the precoloring on $\{ v_1, v_2 \}$ to all of $G[Y]$
		such that every internal face of $G[Y]$ has a unique maximum color.
		Coloring $u$ with $4$ gives a unique maximum color to every face incident with $u$,
		thus completing the coloring of $G$ and producing a contradiction.
	\end{proof}
	
	\begin{claim}
		\label{claim:2444}
		$C$ does not have four vertices.
	\end{claim}
	
	\begin{figure}[htp!]
		\begin{center}
			\includegraphics{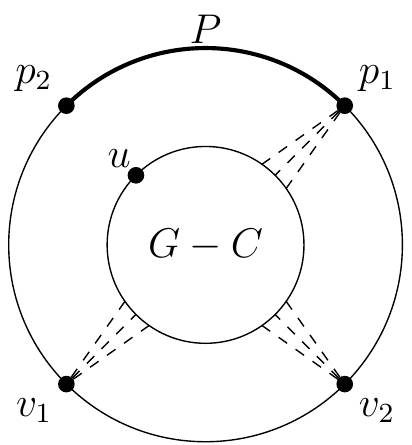}
		\end{center}
		\caption{An outer face consisting of a precolored vertices and two non-precolored vertices of degree at least $4$}
		\label{fig:2444}
	\end{figure}
	
	\begin{proof}
		Suppose for contradiction that $C$ has four vertices.
		Since $C - P$ has at most two vertices, $P$ consists of two vertices, exactly one of which, say $p_2$, is a $2$-vertex, as shown in Figure~\ref{fig:2444}.
		By Claims~\ref{claim:chord} and~\ref{claim:cycle},
		there must exist a vertex $u$ that is on the outer face of $G - P$ but not in $C$.
		Let $Y = V(G) \setminus \{ p_2, u \}$.
		Color $v_2$ with the color used on $p_2$.
		By Claim~\ref{claim:subgraph}, we can extend the precoloring on $\{ p_1, v_2 \}$ to all of $G[Y]$
		such that every internal face of $G[Y]$ has a unique maximum color.
		Since $v_2$ received the same color as $p_2$, the color that $v_1$ receives will not conflict with $p_2$ in $G$.
		Coloring $u$ with $4$ gives a unique maximum color to every face incident with $u$,
		thus completing the coloring of $G$ and producing a contradiction.
	\end{proof}
	
	\begin{claim}
		\label{claim:2-vertexP}
		$P$ does not have a $2$-vertex.
	\end{claim}
	
	\begin{figure}[htp!]
		\begin{center}
			\includegraphics{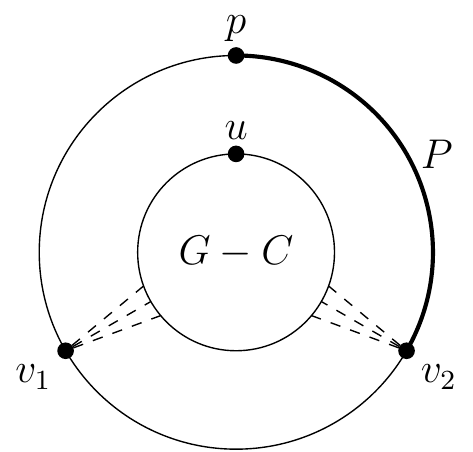}
		\end{center}
		\caption{A precolored $2$-vertex $p$ on the outer face of $G$}
		\label{fig:2-vertexP}
	\end{figure}
	
	\begin{proof}
		Suppose for contradiction that $P$ has a $2$-vertex $p$.
		Let $v_1$ and $v_2$ be the neighbors of $p$ in $C$.
		By symmetry, we can assume that $v_1$ is not in $P$.
		If $|V(P)| = 1$, then $v_2 \in V(C) \setminus V(P)$ and thus has degree at least $4$.
		Since $X$ is a star forest, $v_1$ and $v_2$ must be the only vertices in $C - P$.
		If $|V(P)| = 2$, then $v_2 \in V(P)$ and $d (v_2) \geq 3$.
		Since $d (v_1) \geq 4$, it must be the only vertex in $C - P$.
		In either case, $v_1$ and $v_2$ must be adjacent along $C$.
		If $v_2$ is not in $V(P)$, color $v_2$  with a color from $\{ 1, 2, 3 \}$ not used on $p$.
		Next, color $v_1$ with a color from $\{ 1, 2, 3 \}$ not used on either $p$ or $v_2$.
		By Claims~\ref{claim:chord} and~\ref{claim:cycle}, there must exist a vertex $u$ that is on the outer face of $G - v$ but not in $C$,
		as shown in Figure~\ref{fig:2-vertexP}.
		Let $Y = V(G) \setminus \{ u, p \}$.
		By Claim~\ref{claim:subgraph}, we can extend the precoloring on $\{ v_1, v_2 \}$ to all of $G[Y]$
		such that every internal face of $G[Y]$ has a unique maximum color.
		Coloring $u$ with $4$ gives a unique maximum color to every face of $G$ incident with $u$,
		thus completing the coloring and producing a contradiction.
	\end{proof}
	
	By Claim~\ref{claim:2-vertexP}, each vertex in $P$ must have degree at least $3$.
	Furthermore, by Claim~\ref{claim:23-vertex} each vertex in $C - P$ must have degree at least $4$.
	This means that every vertex in $C$ also belongs to $X$, a contradiction since $X$ is a star forest and $C$ is a cycle.
\end{proof}
	
We are now ready to prove the main theorem of this paper.
	
\begin{proof}[Proof of Theorem~\ref{thm:starforest}]
	Let $G$ be a plane graph and $X = \{ v \in V(G) : d (v) \geq 4 \}$ such that $G[X]$ is a star forest.
	Pick a vertex $v$ on the outer face of $G$ and apply Lemma~\ref{lem:precolor} to $G - v$.
	Color $v$ with $4$ to complete the coloring.
\end{proof}

\section{Conclusion}

Let $G$ be a plane graph and $X = \{ v \in V(G) : d(v) \geq 4 \}$.
In our proof, we used the assumption that $X$ induces a star-forest when arguing before Claim~\ref{claim:22444} that $C$ is short. It might be possible to extend the proof to any acyclic graph.

\begin{conjecture}
	If $G[X]$ is an acyclic graph, then $\chiC (G) \leq 4$.
\end{conjecture}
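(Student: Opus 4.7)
The plan is to mirror the structure of the proof of Lemma~\ref{lem:precolor}, replacing ``star forest'' with ``acyclic'' throughout and taking a minimum counterexample. Since acyclicity is hereditary under vertex deletion, the subgraph claim (Claim~\ref{claim:subgraph}) goes through verbatim: any newly-precolored vertex must lie in the analog of $X$, and an induced subgraph of a forest remains a forest. Claims~\ref{claim:Fcycle}--\ref{claim:23-vertex} also carry over, since they depend only on connectivity, the chordless cycle structure of $C$, and local reductions around a low-degree vertex together with an off-cycle neighbor to be colored $4$. After these steps we reach the configuration in which every vertex of $C-P$ has degree at least $4$, so $V(C) \setminus V(P) \subseteq X$; since $G[X]$ is acyclic and $C$ is a cycle, $P$ must be nonempty and must contain a vertex of degree $2$.

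The main obstacle is the step the authors single out, namely the deduction that $|V(C) \setminus V(P)| \leq 3$. This used crucially that every path in a star forest has at most three vertices, and in the general acyclic setting the arc $C-P$ is a path in the forest $G[X]$ that can be arbitrarily long. To adapt the argument, I would try strengthening the induction hypothesis to allow a longer precolored path $P$ on the outer face, and then attempt to ``peel off'' consecutive degree-$\geq 4$ vertices of $C-P$ one at a time, each time reusing the analog of Claim~\ref{claim:subgraph} to preserve acyclicity when a peeled vertex is absorbed into the new $P$. A complementary route is to seek a reducible local configuration inside an arbitrarily long arc: for example a degree-$4$ vertex $v \in C-P$ whose two interior neighbors $x,y$ form a triangle with $v$, so that deleting $v$ (and possibly an adjacent low-degree interior vertex to be colored $4$) yields a smaller instance to which induction applies.

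The expected main difficulty lies in this last step. The peeling approach must avoid creating a cycle in the analog of $G[X]$ after precoloring, which can happen precisely when a peeled vertex would link two previously distinct components of the forest through newly precolored degree-$3$ vertices; controlling this globally appears to require a new structural invariant. The local-configuration approach, on the other hand, needs such a configuration to exist inside every long arc, which is not obvious and likely fails in worst-case triangulations near the outer face. For these reasons I expect a successful proof to require either a new structural lemma about the interior of long arcs of degree-$\geq 4$ vertices along $C$, or a more elaborate induction tracking additional data near $P$ (such as allowed chords or near-triangulated regions). This is presumably why the authors state the result only as a conjecture.
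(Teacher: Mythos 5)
This statement is an open conjecture in the paper; the authors give no proof of it, and your write-up does not give one either. What you have produced is a correct diagnosis of the obstacle rather than an argument: you rightly observe that Claim~\ref{claim:subgraph} and the early reductions survive the replacement of ``star forest'' by ``acyclic'' (acyclicity is hereditary and the claim shows $X' \subseteq X$), and you correctly locate the breaking point at the deduction, just before Claim~\ref{claim:22444}, that $C-P$ has at most three vertices --- which uses that a path inside a star forest has at most three vertices and fails for a general forest, where the arc of degree-$\geq 4$ vertices along $C$ can be arbitrarily long. This matches exactly the remark the authors themselves make in the conclusion when explaining why they state the result only as a conjecture. (Note also that the star-forest hypothesis is invoked a second time, inside Claim~\ref{claim:2-vertexP}, so a full adaptation would have to revisit that step as well.)

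The genuine gap is therefore the entirety of the final step: neither of your two proposed routes is carried out. The peeling idea requires a strengthened induction hypothesis with a longer precolored path $P$, but Lemma~\ref{lem:precolor} as stated caps $|V(P)|$ at two, and enlarging $P$ adds degree-$3$ vertices of $P$ into $X$, which is precisely how new adjacencies (and potentially cycles, or at least uncontrolled growth) appear in the analog of $G[X]$; you acknowledge you have no invariant controlling this. The local-configuration idea presupposes a reducible configuration inside every long arc of high-degree vertices on $C$, and you concede you cannot exhibit one. Since both branches terminate in ``this appears to require a new structural lemma,'' the proposal should be read as a (sound and well-aligned) research plan for attacking the conjecture, not as a proof of it.
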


A special case of Theorem~\ref{thm:starforest} is that if $G[X]$ forms a matching, then  $\chiC (G) \leq 4$.
We believe a stronger result will be true if the maximum degree in $G[X]$ is 2.
If this is true, maybe it could be extended to maximum degree 3 in $G[X]$.
\begin{conjecture}
	If $G[X]$ is a graph of maximum degree 2, then $\chiC (G) \leq 4$.
\end{conjecture}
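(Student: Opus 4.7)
The plan is to mirror Lemma~\ref{lem:precolor}: strengthen the conjecture to a precoloring-extension statement in which $G$ is plane, $P$ is a path on the outer face with at most two vertices, $X$ consists of the vertices of degree at least $4$ together with the precolored 3-vertices on $P$, and $G[X]$ is assumed to have maximum degree at most $2$; then show that every proper 3-coloring of $P$ extends to a FUM-4-coloring of $G$ that avoids color $4$ on the outer face. The theorem-level consequence would follow as in the current proof of Theorem~\ref{thm:starforest}, by deleting an outer vertex and painting it $4$.

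The first task is to re-examine the subgraph claim (Claim~\ref{claim:subgraph}). When passing to a subgraph $H$ with a slightly modified precolored path $P'$, we need the $X'$ set for $H$ to still induce a graph of maximum degree at most $2$. The same style of reasoning as in the paper shows that any vertex lying in $X'$ but not in $X$ must be a 3-vertex of $H$ on $P'$ whose degree in $G$ is strictly larger, hence already in $X$; no new edges appear in $G[X]$ under this redefinition, so the max-degree-2 hypothesis survives. Granted this, Claims~\ref{claim:Fcycle}--\ref{claim:2-vertexP} should translate almost verbatim, since each of those proofs uses the hypothesis on $G[X]$ only insofar as it is preserved under taking the indicated subgraphs.

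The genuine obstacle appears in the very last paragraph of the current proof. There, the contradiction is that $V(C) \subseteq X$ while $C$ is a cycle, which is forbidden because a star forest is acyclic. Under the relaxed hypothesis of maximum degree $2$, $C$ may legitimately be a cycle inside $G[X]$, so the argument collapses at exactly the configuration where the outer boundary is a triangle or a 4-cycle with every vertex of degree at least $4$ (up to the precoloring on $P$). A new reducible configuration must be produced inside this short boundary, and this is where the real work sits.

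A promising direction is that, since each $C$-vertex already uses both of its $G[X]$-edges along $C$ itself, every neighbor of a $C$-vertex strictly inside $C$ must have degree at most $3$ in $G$. The interior of such a short outer cycle is therefore degree-sparse, which suggests either a discharging argument forbidding a minimal counterexample to have this shape, or a direct reduction: identify or delete an interior vertex of degree at most $3$, extend the coloring by induction, and reinsert the deleted vertex with an appropriate choice from $\{1,2,3,4\}$. The hard part, in my view, is orchestrating color $4$ among possibly several nearby interior vertices so that each of the few faces incident to the short boundary has a uniquely maximum vertex, without ever forcing color $4$ onto the outer face. In the star-forest setting a single interior vertex absorbed color $4$, but with a cycle of $X$-vertices on the outside one may need to coordinate several such insertions simultaneously, and managing those interactions is precisely why the conjecture resists a routine adaptation of the present proof.
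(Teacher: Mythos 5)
This statement is posed in the paper as an open conjecture; the paper contains no proof of it, only the remark (in the conclusion) that the star-forest hypothesis was used ``when arguing before Claim~\ref{claim:22444} that $C$ is short.'' Your proposal is likewise not a proof: it is a program for one, and you say so yourself. So the honest assessment is that there is a genuine gap, and it is worth pinpointing where it sits relative to your own diagnosis.

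You locate the failure in the final paragraph of the proof of Lemma~\ref{lem:precolor}, where $V(C)\subseteq X$ contradicts acyclicity, and you conclude that the only problematic configurations are a triangle or $4$-cycle of big vertices. This understates the damage. The star-forest hypothesis is already used earlier, in the paragraph preceding Claim~\ref{claim:22444}: the vertices of $C-P$ all have degree at least $4$, are consecutive along $C$, and hence induce a path in $G[X]$; a star contains no path on four vertices, which is what bounds $|V(C-P)|$ by three and forces $P$ to contain a $2$-vertex. Under the maximum-degree-$2$ hypothesis, $G[X]$ may contain arbitrarily long paths and cycles, so $C$ can be an arbitrarily long cycle all of whose non-precolored vertices have degree at least $4$. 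Claims~\ref{claim:22444} through~\ref{claim:2-vertexP} are then not ``almost verbatim'' translations; they do not even apply, because their setups presuppose $|V(C)|\le 5$. Your observation that interior neighbors of $C$-vertices must have degree at most $3$ also only holds when a $C$-vertex already spends both of its $G[X]$-edges on $C$; the endpoints of the path $C-P$ (when $P$ is nonempty and outside $X$) have only one $X$-neighbor on $C$ and may have another in the interior. So the reducible configuration you would need to produce is not confined to a short boundary with a sparse interior: it is a long cycle of big vertices, which is exactly the regime where coordinating several insertions of color $4$ across many incident internal faces becomes the whole problem. Identifying the precoloring-extension framework as the natural line of attack is reasonable, but no step of the proposal closes this case, so the conjecture remains open as stated.
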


The connected plane graph $H$ with $\chiC(H) > 4$ found in~\cite{LidMesSkr17} has minimum degree 4 and two vertices of degree five.  
The construction could be disconnected, which gives a 4-regular graph.
It is not clear to us if adding the connectivity constraint gives $\chiC = 4$ or not.

\begin{problem}
	Is there a connected plane graph $G$ with maximum degree 4 with $\chiC (G) > 4$?
\end{problem}

\section{Acknowledgment}

The research project has been supported by the bilateral cooperation between USA and Slovenia, project no. BI--US/17--18--013.
R. \v{S}krekovski was partially supported by the Slovenian Research Agency Program P1--0383.
B. Lidick\'y was partially supported by NSF grant DMS-1600390.

\bibliographystyle{abbrv}
	\bibliography{MainBase}

\end{document}